\documentclass[12pt]{amsart}
\usepackage{amsmath,amssymb,amsthm,mathtools}
\usepackage[dvipsnames]{xcolor}
\usepackage{enumitem}
\usepackage[hidelinks]{hyperref}
\hypersetup{
    pdftitle={A double-logarithmic upper bound on the chromatic number of the associahedron},
    pdfauthor={Sang-il Oum and David R. Wood}
}
\usepackage{tikz}
\usetikzlibrary{calc}
\tikzset{
  tnode/.style={circle, draw=black, fill=black, inner sep=0pt, minimum width=3pt},
  hl/.style={line width=4pt, opacity=0.35, line cap=round},
}
\newcommand{\tsubtree}[5]{\draw[hl,#4] ({#1-0.2},{#2-1.1}) -- (#1,#2);
  \draw (#1,#2) -- ({#1-0.55},{#2-1.1}) -- ({#1+0.55},{#2-1.1}) -- cycle;
  \node[font=\small] at ({#1+0.22},{#2-0.8}) {$#3$};
  \node[tnode,draw=#4,fill=#4,label={[text=#4]below:$#5$}] at ({#1-0.2},{#2-1.1}) {};
  \node[tnode] at (#1,#2) {};}
\usepackage[foot]{amsaddr}
\usepackage{lineno}

\usepackage[margin=30mm]{geometry}

\newtheorem{theorem}{Theorem}
\newtheorem{lemma}{Lemma}

\usepackage{zref-clever}
\zcsetup{cap=true}
\zcRefTypeSetup{claim}{Name-sg=Claim, name-sg=claim, Name-pl=Claims, name-pl=claims}

\renewcommand{\le}{\leqslant}
\renewcommand{\leq}{\leqslant}
\renewcommand{\ge}{\geqslant}
\renewcommand{\geq}{\geqslant}

\DeclareMathOperator{\parent}{parent}
\DeclareMathOperator{\ROOT}{root}
\DeclareMathOperator{\king}{king}
\newcommand{\HHH}{m}

\newcommand{\defn}[1]{\textcolor{Maroon}{\emph{#1}}}
\newcommand{\mathdefn}[1]{\textcolor{Maroon}{#1}}

\title[On the chromatic number of the associahedron]{A double-logarithmic upper bound on the~chromatic~number of the associahedron}
\author{Sang-il Oum$^\dagger$}
\address{$^\dagger$Discrete Mathematics Group, Institute for Basic Science (IBS), Daejeon,~South~Korea.}
\address{$^\dagger$Department of Mathematical Sciences, KAIST, Daejeon, South Korea.}
\email{sangil@ibs.re.kr}
\author{David R.~Wood$^\ddagger$}
\address{$^\ddagger$School of Mathematics, Monash University, Melbourne, Australia.}
\thanks{$^\dagger$Supported by the Institute for Basic Science (IBS-R029-C1).}
\thanks{$^\ddagger$Supported by the Australian Research Council and by NSERC.}
\email{david.wood@monash.edu}
\date{\today}

\begin{document}
\begin{abstract}
We show that the associahedron $\mathcal{A}_n$ has chromatic number $O(\log\log n)$, improving on the previously best known upper bound of $O(\log n)$.
\end{abstract}

\maketitle

\section{Introduction}

Associahedra are combinatorial objects that appear across mathematics, including 
in algebraic topology~\cite{Stasheff63a,Stasheff63b}, 
in order theory via the Tamari lattice~\cite{Tamari1951}, 
in enumerative combinatorics~\cite{StanleyCatalan}, 
in convex polytopes~\cite{Loday2004}, 
in discrete geometry~\cite{GKZ1994},  
in cluster algebras~\cite{FominZelevinsky2003}, and 
in monoidal categories~\cite{Kapranov1993}.

Fix an integer $n \ge 3$ and a convex polygon $P_n$ with $n$ labelled vertices $0,1,\dots,n-1$ in cyclic order. A \defn{diagonal} of $P_n$ is a segment joining a pair of non-consecutive vertices of $P_n$. A
\defn{triangulation} of $P_n$ is a maximal set of pairwise non-crossing diagonals. Every triangulation of $P_n$ has exactly $n-3$ diagonals, and divides $P_n$ into exactly $n-2$ triangles whose union is $P_n$. 
The \defn{associahedron} $\mathcal{A}_n$ (more precisely, its $1$-skeleton) is the graph whose vertices are the triangulations of $P_n$, where two triangulations~$T_1$ and~$T_2$ of~$P_n$ are adjacent in $\mathcal{A}_n$ if~$T_2$ can be obtained from~$T_1$ by deleting one diagonal of~$T_1$ and adding a different diagonal in~$T_2$ (so $T_1$ and $T_2$ have $n-4$ diagonals in common).

We study the chromatic number\footnote{A \defn{$k$-colouring} of a graph $G$ is a function that assigns one of $k$ colours to each vertex such that adjacent vertices are assigned distinct colours. The \defn{chromatic number $\chi(G)$} is the minimum integer $k$ such that $G$ is $k$-colourable.} of associahedra. This topic was initiated by Fabila-Monroy, Flores-Pe\~naloza, Huemer, Hurtado, Urrutia, and
Wood~\cite{FabilaEtAl2009}, who gave an explicit $\lceil n/2\rceil$-colouring of $\mathcal{A}_n$, and observed $\chi(\mathcal{A}_n) \in O(n/\log n)$ via a general theorem of Johansson~\cite{Johansson1994}
on triangle-free graphs. Addario-Berry, Reed,
Scott, and Wood~\cite{AddarioBerryEtAl} verified a conjecture of Fabila-Monroy et al.~\cite{FabilaEtAl2009}, stating that $\chi(\mathcal{A}_n) \in O(\log n)$. We improve this logarithmic bound to double-logarithmic.

\begin{theorem}
\label{thm:main}
$\chi(\mathcal{A}_n) \,\le\, 10\,\log_2\log_2 n$ for $n \ge 4$.
\end{theorem}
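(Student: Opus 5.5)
We will work with binary trees rather than triangulations. Fix the edge $\{0,n-1\}$ as the root side. Triangulations of $P_n$ are then in bijection with binary trees $T$ on $n-2$ internal nodes, labelled by their in-order keys $1,\dots,n-2$. A flip becomes a rotation at an edge $xy$ with $y$ a child of $x$. Concretely, for a right rotation: let $p=\parent(x)$, possibly a virtual root, and let $r$ be the root of the inner subtree of $y$, possibly empty. The rotation replaces the parent relations $x\to p$, $y\to x$, $r\to y$ by $y\to p$, $x\to y$, $r\to x$. Every other parent pointer is unchanged, and the keys satisfy $y<r<x$, with $p$ lying outside $[y,x]$. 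The left rotation is symmetric.

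The plan is to assign to each parent pair a \emph{level} $\ell(a,b)\in\{0,\dots,\lceil\log_2 n\rceil\}$: the index of the highest bit in which the binary expansions of $a$ and $b$ differ. This gives an ultrametric on keys, and it is compatible with in-order. In particular, for $y<r<x$ we have $\ell(y,x)=\max(\ell(y,r),\ell(r,x))$, and similarly for $p$ outside the interval.

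Since the pair $\{x,y\}$ survives a rotation, the multiset of levels changes by replacing $\{\ell(x,p),\ell(r,y)\}$ with $\{\ell(y,p),\ell(r,x)\}$. The ultrametric inequalities strongly constrain these four numbers. The first step is a short case analysis: either the two multisets differ, in which case some level $j$ has its count change by exactly $\pm1$ (or by $\pm2$ in a controlled pattern), or they coincide. The colour will then be built from counts $N_j(T)=\#\{v:\ell(v,\parent(v))=j\}$, indexed by only $O(\log n)$ levels $j$. To get down to $O(\log\log n)$ colours, I would take $c(T)=\sum_j N_j(T)\,w_j \bmod q$. Here the weights $w_j$ are chosen so that every possible nonzero change vector (a difference of two pairs of levels, with support of size at most $4$ and the ultrametric shape) is nonzero mod $q$. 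I would try $w_j$ equal to $j$ or $2^{j}$ reduced modulo a prime $q$, with $q=\Theta(\log\log n)$ chosen via the structure of the differences. For instance, the differences have the form $\ell$ versus $\ell'$ where one is the max of the other two, so one wants $w_a+w_b\neq w_a'+w_b'$ only for those specific patterns. I would then record $O(1)$ extra bits, such as parities, to handle residual coincidences. The total should come to at most $10\log_2\log_2 n$.

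The main obstacle is the degenerate case where the multisets coincide, for example when $\ell(x,p)=\ell(r,x)$ and $\ell(r,y)=\ell(y,p)$. Then no level count changes, so the colour must also see more than the levels. The first thing I would try is refining the level into the pair (level, which side of the split: left or right endpoint is the smaller). The rotation swaps parent-child orientation, so in exactly these coincidence cases orientation counts shift. The colour then becomes a weighted sum over (level, orientation) classes. I expect most of the work to be verifying, by the ultrametric case analysis, that every rotation changes this refined weighted sum by a nonzero amount modulo $q$. If a single modulus fails, I would split into a product of a constant-size colour and an $O(\log\log n)$-size colour, which stays within the constant $10$.
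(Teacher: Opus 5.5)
\textbf{There is a genuine gap, and it is in the generic case, not in the coincidence case you flag.} No colouring of the form $c(T)=\sum_j N_j(T)w_j \bmod q$ can use fewer than about $\log_2 n$ colours. This holds with or without your orientation refinement, and with or without $O(1)$ extra parities of further class counts. The reason is that a single rotation can move one unit of count between \emph{any} two levels.

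Concretely, let $0\le\beta<\alpha$ with $2^{\alpha+1}\le n-2$. Take keys $y=2^\alpha-1$, $r=2^\alpha$, $x=2^\alpha+2^\beta$, $p=2^{\alpha+1}$ in a search tree where $x$ is the left child of $p$, $y$ is the left child of $x$, and $r$ is the right child of $y$. Such a tree exists: root it at $p$ and fill the remaining key intervals arbitrarily.

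Then $\ell(x,p)=\ell(y,p)=\alpha+1$, $\ell(y,x)=\ell(y,r)=\alpha$ and $\ell(r,x)=\beta$. So your right rotation at $xy$ removes the levels $\{\alpha+1,\alpha\}$ and adds $\{\alpha+1,\beta\}$, and the count vector changes by exactly $e_\beta-e_\alpha$.

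In the refined classes the picture is the same:
\begin{itemize}
\item $y$ moves from $(\alpha,\mathrm{left})$ to $(\alpha+1,\mathrm{left})$;
\item $x$ moves from $(\alpha+1,\mathrm{left})$ to $(\alpha,\mathrm{right})$;
\item $r$ moves from $(\alpha,\mathrm{right})$ to $(\beta,\mathrm{left})$.
\end{itemize}
The net change is $e_{(\beta,\mathrm{left})}-e_{(\alpha,\mathrm{left})}$.

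Hence properness forces $w_0,\dots,w_{K-1}$ (respectively $w_{(0,\mathrm{left})},\dots,w_{(K-1,\mathrm{left})}$) to be pairwise distinct modulo $q$, where $K=\lfloor\log_2(n-2)\rfloor$. So $q\ge K$.

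Adding $t$ parity bits of further weighted counts does not help. The colour is then the image of the count vector under a homomorphism into an abelian group of order $q2^{t}$, and this must still separate these $K$ unit vectors, so $q2^t\ge K$. This framework can therefore at best recover the previously known $O(\log n)$ bound; no choice of weights, primes or parities reaches $O(\log\log n)$.

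\textbf{The missing idea is a non-linear, recursive compression of the label alphabet.} The paper proceeds in three steps.
\begin{enumerate}
\item It converts rotations into \emph{parent slides} of a rooted tree $\widehat T$ on the leaves. Using the ranking $\rho(i)=$ the $2$-adic valuation of $i$, it contracts each set of nodes of $T$ that share the same king. (For $a<b$, your $\ell(a,b)$ is exactly the $\rho$-value of the king of $\{a+1,\dots,b\}$, so your starting point is close.) One parity bit, the inversion count $i(T)$ of the contraction words, handles the rotations that leave $\widehat T$ unchanged.
\item It colours the slide graph $\mathcal{G}_{V,m}$, with $m=\lceil\log_2 n\rceil$, by relabelling each non-root node $u$ with $\min\big(S_{\ell(u)}\setminus S_{\ell(\parent(u))}\big)$. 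Here the $S_x$ are distinct $2$-subsets of a set of size $r_m\approx\sqrt{2m}$. A slide at $v$ can change only $v$'s new label, and when it does it flips the recorded parity of the choice bits. This gives $\chi(\mathcal{G}_{V,m})\le 2\chi(\mathcal{G}_{V,r_m})$.
\item It iterates this reduction about $\log_2\log_2 m$ times. Each step takes roughly a square root of the alphabet size at the cost of doubling the number of colours, which yields $O(\log m)=O(\log\log n)$ colours.
\end{enumerate}
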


The proof of \zcref{thm:main} works with full binary trees $T$ rather than triangulations directly, using a
classical bijection recalled in \zcref{sec:bijection}. The heart of the
argument is a general-purpose scheme (\zcref{sec:forests}) for colouring trees under the so-called parent slide operation. 
This scheme is applied to the so-called tournament tree $\widehat{T}$ obtained by contracting a binary tree $T$ according to a ranking of its leaves (\zcref{sec:tournament}). Tree rotations in $T$ are encoded by parent slides in $\widehat{T}$ (together with a parity bit).

We now record other known graph-theoretic properties of~$\mathcal{A}_n$. By construction,  $|V(\mathcal{A}_n)|$ equals the Catalan number $C_{n-2} = \frac{1}{n-1}\binom{2n-4}{n-2}$~\cite{StanleyCatalan}, and  $\mathcal{A}_n$ is $(n-3)$-regular. Moreover, it follows from Balinski's theorem~\cite{Balinski1961} that $\mathcal{A}_n$ has connectivity equal to $n-3$, as proved directly by Hurtado
and Noy~\cite{HurtadoNoy1999}.
The diameter of $\mathcal{A}_n$ was studied by Sleator, Tarjan, and Thurston~\cite{SleatorTarjanThurston1988}, who
related it to volumes of ideal polyhedra in hyperbolic $3$-space and showed it equals
$2n-10$ for all sufficiently large~$n$.
Pournin~\cite{Pournin2014} showed the diameter equals $2n-10$ for every $n>12$. 
Lucas~\cite{Lucas1987} showed that $\mathcal{A}_n$ is Hamiltonian for~$n\geq 5$; Hurtado and Noy~\cite{HurtadoNoy1999} later gave a simple proof. Lee~\cite{Lee1989} determined the automorphism group of $\mathcal{A}_n$.  Parlier and Petri~\cite{ParlierPetri2018} proved bounds on the genus of $\mathcal{A}_n$.

\subsection{Rotations in Binary Trees}
\label{sec:bijection}

We now recall the connection between triangulations of~$P_n$ and rooted binary trees. Since we will consider graphs whose vertices are themselves trees, we call the vertices of a tree \defn{nodes}, reserving the word `vertex' for the graphs $\mathcal{A}_n$,~$\mathcal{R}_L$, and~$\mathcal{G}_{V,m}$ defined below. A \defn{rooted tree} $T$ is a tree with a distinguished \defn{root} node. 
Let $\parent_T(u)$ denote the \defn{parent} of each non-root node $u$ in $T$. 
We call~$u$ a \defn{child} of $v$ if $\parent_T(u) = v$. A \defn{leaf} is a non-root node with no children. A \defn{full binary tree} is a rooted tree  in which every non-leaf node $v$ has exactly
two children, distinguished as the \defn{left child} and \defn{right child} of $v$; the subtree rooted at the left/right child of $v$ is called
the \defn{left}/\defn{right subtree} of $v$. 

A full binary tree $T$ with $L\geq 2$ leaves  has exactly $L-1$ non-leaf nodes. Label the leaves $1,\dots,L$ such that at each non-leaf node $v$ of $T$:
\begin{itemize}
\item the leaves in the subtree of $T$ rooted at $v$ form a set of consecutive integers, and 
\item the leaves in the left subtree of $v$ receive smaller labels than the leaves in the right subtree of $v$.
\end{itemize}

We now describe the well-known bijection between triangulations and full binary trees. 
Fix the boundary edge $e=\{n-1,0\}$ of $P_n$ as a distinguished `root edge.' Every
triangulation $T$ of~$P_n$ contains a unique triangle using the edge $e$; let its third
vertex be $k \in \{1,\dots,n-2\}$. This triangle splits the rest of the polygon into two
parts, the sub-polygon on vertices $0,1,\dots,k$ and the sub-polygon on vertices
$k,k+1,\dots,n-1$, each triangulated by the diagonals of~$T$ restricted to it (together with
the missing side $\{0,k\}$ and $\{k,n-1\}$ respectively, if that sub-polygon has more than two 
vertices). We turn this into a full binary tree with $n-1$ leaves recursively, where a
sub-polygon with two vertices (a single edge) becomes a single leaf, and a sub-polygon with more
vertices becomes a non-leaf node whose left and right children are the (recursively built) trees for its two parts. This is a bijection between triangulations of~$P_n$ and full binary trees with $n-1$ leaves; see \zcref{fig:bijection} for an example. Throughout the rest of this paper we work with full binary trees, and set $L = n-1$ for the number of leaves.

\begin{figure}[t]
\[
\begin{array}{c@{\qquad}c@{\qquad}c}
\begin{tikzpicture}[
  scale=0.75,
  baseline=(current bounding box.center),
  every node/.style={font=\small},
  lf/.style={draw,rectangle,fill=white,inner sep=0pt,minimum size=4.5mm},
  tr/.style={fill=white,inner sep=1pt} 
]
  \foreach \i in {0,...,6} {\coordinate (P\i) at ({244.2857-51.42857*\i}:2.6);}
  \draw (P0)--(P1)--(P2)--(P3)--(P4)--(P5)--(P6);
  \draw[very thick] (P6)--(P0);
  \draw (P0)--(P2);  \draw (P2)--(P6);  \draw (P2)--(P4);  \draw (P4)--(P6);
  \foreach \i in {0,...,6} {\fill (P\i) circle (1.4pt);}
  \foreach \i in {0,...,6} {\node at ({244.2857-51.42857*\i}:3.25) {$\i$};}
  \node[lf] at (218.571:2.80) {$1$};
  \node[lf] at (167.143:2.80) {$2$};
  \node[lf] at (115.714:2.80) {$3$};
  \node[lf] at ( 64.286:2.80) {$4$};
  \node[lf] at ( 12.857:2.80) {$5$};
  \node[lf] at (-38.571:2.80) {$6$};
  \node[tr] at (-0.678,-1.021) {$e$};
  \node[tr] at (-1.899,-0.433) {$a$};
  \node[tr] at ( 0.376, 0.300) {$b$};
  \node[tr] at ( 0.000, 1.947) {$c$};
  \node[tr] at ( 1.899,-0.433) {$d$};
\end{tikzpicture}
&
\longleftrightarrow
&
\begin{tikzpicture}[
  baseline=(current bounding box.center),
  level distance=12mm,
  level 1/.style={sibling distance=42mm},
  level 2/.style={sibling distance=21mm},
  level 3/.style={sibling distance=9mm},
  every node/.style={font=\small},
  inode/.style={draw,circle,inner sep=0pt,minimum size=5mm},
  lnode/.style={draw,rectangle,inner sep=0pt,minimum size=5mm}
]
\node[inode] {$e$}
  child { node[inode] {$a$}
    child { node[lnode] {$1$} }
    child { node[lnode] {$2$} }
  }
  child { node[inode] {$b$}
    child { node[inode] {$c$}
      child { node[lnode] {$3$} }
      child { node[lnode] {$4$} }
    }
    child { node[inode] {$d$}
      child { node[lnode] {$5$} }
      child { node[lnode] {$6$} }
    }
  };
\end{tikzpicture}
\end{array}
\]
\caption{The bijection for $n=7$, applied to the triangulation with diagonals $\{0,2\}$,
$\{2,4\}$, $\{4,6\}$, and $\{2,6\}$. Each sub-polygon arising in the recursion is recorded
here by the triangle sitting on its base edge. The root $e$ is the triangle on the root
edge $\{6,0\}$ (drawn thick), and the non-leaf nodes $a$, $b$, $c$, and $d$ (circles in the
tree) are the triangles on the diagonals $\{0,2\}$, $\{2,6\}$, $\{2,4\}$, and $\{4,6\}$,
respectively. The two-vertex sub-polygons are the remaining boundary edges, giving the
leaves $1,\dots,6$ (squares) in order.}
\label{fig:bijection}
\end{figure}

We now interpret the edges of $\mathcal{A}_n$ in terms of rotations. 
Let $T$ be a full binary tree containing a non-leaf node $v$ whose right child is itself a non-leaf. Let $A$ be the left subtree of $v$, and let $B$ and $C$ be the left and right subtrees of the right child of $v$. We say the \defn{shape} at $v$ is 
$\big(A,(B,C)\big)$.  As illustrated in \zcref{fig:Rotations}, the \defn{left rotation} at $v$ replaces this shape by
$\big((A,B),\,C\big)$; 
that is, $C$ becomes the right subtree of $v$, and $A$ and $B$ become the left and right subtrees of the left child of $v$. Everything outside this local picture is left unchanged.
A \defn{right rotation} is the reverse move (applied at a node whose left child is a non-leaf).

\begin{figure}[!h]
\[
\begin{array}{ccc}
\begin{tikzpicture}[level distance=10mm, sibling distance=14mm, scale=0.75]
  \tikzstyle{v}=[circle, draw=black, solid, fill=black, inner sep=0pt, minimum width=2pt]
\node [v,label=$v$] {}
 child { node [label=below:$A$,v]{} }
 child { node [v] {}
   child { node [label=below:$B$,v] {} }
   child { node[label=below:$C$,v] {} }
 };
\end{tikzpicture}
&
\begin{tikzpicture}[baseline=0mm]
\draw[->,thick] (0,1.4) -- (1.4,1.4) node[midway,above]{left rotation};
\draw[<-,thick] (0,0.2) -- (1.4,0.2) node[midway,above]{right rotation };
\end{tikzpicture}
&
\begin{tikzpicture}[level distance=10mm, sibling distance=14mm, scale=0.75]
    \tikzstyle{v}=[circle, draw=black, solid, fill=black, inner sep=0pt, minimum width=2pt]

\node [label=$v$,v]{}
 child { node [v]{}
   child { node[label=below:$A$,v] {} }
   child { node[label=below:$B$,v] {} }
 }
 child { node [label=below:$C$,v] {} };
\end{tikzpicture}
\end{array}
\]
\caption{Left and right rotations in full binary trees.}
\label{fig:Rotations}
\end{figure}

A diagonal flip in a triangulation of $P_n$ replaces one diagonal of a quadrilateral by the other, and under the recursive correspondence above, a diagonal of the triangulation
corresponds to an edge of the binary tree joining a non-leaf node to one of its non-leaf children,
with the quadrilateral on either side of that diagonal corresponding exactly to the shape $\big(A,(B,C)\big)$ or $\big((A,B),C\big)$ used in the definition of rotation; flipping the diagonal is exactly performing the corresponding rotation, and every
other diagonal (hence every other part of the tree) is unaffected. Thus, two triangulations of $P_n$ are adjacent in $\mathcal{A}_n$ if and only if one of the corresponding full binary trees is obtained from the other by a rotation.

\section{A General Framework}\label{sec:forests}

We now set up a general framework that we will later specialise to associahedra. 
Let~$F$ be a rooted tree on a node set $V$, and let $v,b,c \in V$ satisfy $\parent_F(v) = b$ and $\parent_F(b) = c$ (so $c$ is the parent of the parent of $v$). As illustrated in \zcref{fig:Slide}, the \defn{parent slide at $v$ from $b$ to $c$} is the rooted tree $F'$ on $V$, with the same root as $F$, obtained by setting $\parent_{F'}(v) = c$ and leaving every other parent unchanged.

\begin{figure}[!h]
\[
\begin{array}{ccc}
\begin{tikzpicture}[level distance=10mm, sibling distance=10mm, scale=0.85]
    \tikzstyle{v}=[circle, draw=black, solid, fill=black, inner sep=0pt, minimum width=2pt]
\node [v,label=left:$c$]{}
  child { node [v,label=left:$b$] {}
    child { node [v,label=left:$v$]{} }
  };
\end{tikzpicture}
&
\begin{tikzpicture}[baseline=-5mm]
\draw[->,thick] (0,0) -- (1.4,0) node[midway,above]{slide at $v$};
\end{tikzpicture}
&
\begin{tikzpicture}[scale=0.85]
      \tikzstyle{v}=[circle, draw=black, solid, fill=black, inner sep=0pt, minimum width=2pt]
\node at (0,0)[label=left:$c$,v](c) {};
\node at (0,-1)[label=left:$b$,v](b){};
\node at (0,-2) [label=left:$v$,v](v){};
\draw (b)--(c);
\draw (v) [bend right=40] to (c);
\end{tikzpicture}
\\
F & & F'
\end{array}
\]
\caption{The parent slide at $v$ from $b$ to $c$.}
\label{fig:Slide}
\end{figure}

A parent slide is analogous to the `pointer jumping' technique from parallel algorithms, used for instance in parallel list ranking by Wyllie~\cite{Wyllie1979}. 

For $m\geq 1$, an \defn{$m$-labelling} of a tree $F$ is a colouring $\ell\colon V(F) \to \{0,1,\dots,m-1\}$. (We use `labelling' to distinguish from colourings of other graphs.)\ 
A \defn{rooted $m$-labelled tree} is a pair $(F,\ell)$ of 
a rooted tree $F$ 
and an $m$-labelling of~$F$.
For a finite set $V$ and $m \ge 1$, let $\mathcal{G}_{V,m}$ be the graph whose vertices are all rooted $m$-labelled trees on $V$, where two rooted $m$-labelled trees $(F,\ell)$ and $(F',\ell')$ are adjacent in $\mathcal{G}_{V,m}$ whenever $\ell = \ell'$ and $F'$ is a parent slide of $F$ or vice versa.

Consider an edge of $\mathcal{G}_{V,m}$ joining $(F,\ell)$ and $(F',\ell)$, given by a parent slide
at $v$ from~$b$ to~$c$. Since $\parent_F(v)=b$ and $\parent_F(b)=c$ we have $\ell(v)\neq\ell(b)$ and $\ell(b)\neq \ell(c)$. Since $\parent_{F'}(v)=c$ we have $\ell(v)\neq\ell(c)$. Thus 
$\ell(v),\ell(b),\ell(c)$ are pairwise distinct; we refer to this as the \defn{three-label property}. 

As a warm-up, we have the following simple bound on the chromatic number of $\mathcal{G}_{V,m}$.

\begin{lemma}\label{lem:basecolor}
For any finite set $V$ and integer $m\ge1$, 
$\mathcal{G}_{V,m}$ is $m$-colourable.
\end{lemma}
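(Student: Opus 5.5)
The plan is to write down an explicit colouring by a \emph{potential function} that changes by a nonzero amount modulo $m$ under every parent slide, while the labelling $\ell$ is held fixed. For a rooted $m$-labelled tree $(F,\ell)$ on $V$, define
\[
\phi(F,\ell) \;=\; \sum_{u} \ell\bigl(\parent_F(u)\bigr) \pmod m,
\]
where the sum ranges over all non-root nodes $u$ of $F$. This sum is the total label of the parents, counted with multiplicity. We use $\phi$ as a colouring with colour set $\mathbb{Z}/m\mathbb{Z}$, which has $m$ colours.

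The key step is to compare $\phi$ at the two ends of an edge. Take an edge of $\mathcal{G}_{V,m}$ joining $(F,\ell)$ and $(F',\ell)$, given by the parent slide at $v$ from $b$ to $c$. Both endpoints have the same labelling $\ell$ and the same root. Every non-root node other than $v$ has the same parent in $F$ and $F'$. Hence all terms of the two sums agree except the term for $v$, and we get
\[
\phi(F',\ell)-\phi(F,\ell)\;\equiv\;\ell(c)-\ell(b)\pmod m.
\]

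Next we show this difference is nonzero. By the three-label property, $\ell(b)\neq\ell(c)$. Both labels lie in $\{0,1,\dots,m-1\}$, so $0<|\ell(c)-\ell(b)|<m$, and the difference is not divisible by $m$. Therefore $\phi$ gives adjacent vertices distinct colours, so it is a proper $m$-colouring of $\mathcal{G}_{V,m}$.

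Two degenerate situations need a brief check. First, if $m\le 2$, the three-label property shows that $\mathcal{G}_{V,m}$ has no edges at all, so it is trivially $m$-colourable; the argument above covers this case anyway. Second, the sum is well defined because $V$ is finite. The only real step is finding the potential $\phi$. Once it is chosen, the verification is the one-line difference computation above, so I do not anticipate any genuine obstacle.
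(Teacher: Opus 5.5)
Your proposal is correct and follows the paper's proof: the paper uses the same potential $\phi(F,\ell)=\sum_{u}\ell(\parent_F(u)) \bmod m$ over non-root nodes $u$. As in your argument, only the term for $v$ changes under a slide, going from $\ell(b)$ to $\ell(c)$, and these differ modulo $m$ by the three-label property.
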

\begin{proof}
If $m=1$ then the claim is trivial since 
$\mathcal{G}_{V,m}$ has no edges. Now assume that $m\geq 2$. 
For each vertex $(F,\ell)$ of $\mathcal{G}_{V,m}$, let 
\[
\phi(F,\ell) \;:=\; \Big( \sum_{u: \text{non-root node of }F} \ell(\parent_F(u)) \Big) \bmod m.
\]
We claim that $\phi$ is a colouring of $\mathcal{G}_{V,m}$. 
Consider an edge of $\mathcal{G}_{V,m}$ joining $(F,\ell)$ and $(F',\ell)$, given by a parent slide at
$v$ from $b$ to $c$. The sum defining $\phi$ changes only in the term indexed by $v$ (since every other non-root node keeps the same parent), and that term changes from $\ell(b)$ to $\ell(c)$. Since $\ell(b)\not\equiv \ell(c)\pmod m$ (by the three-label property), 
the change  is non-zero modulo $m$. Hence $\phi(F,\ell) \ne \phi(F',\ell)$.
\end{proof}

In fact, we can save one colour by more careful analysis as follows.
\begin{lemma}\label{lem:basepalette}
For any finite set $V$ and integer $m\ge2$, 
$\mathcal{G}_{V,m}$ is $(m-1)$-colourable.
\end{lemma}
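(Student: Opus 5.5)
The plan is to refine the potential-function argument from the proof of \zcref{lem:basecolor}. There the colour was the sum, modulo $m$, of the parent labels. Now I will work modulo $m-1$ and let each term depend on both the child's label and the parent's label. The key observation is the three-label property. In a parent slide at $v$ from $b$ to $c$, the new parent label $\ell(c)$ and the old parent label $\ell(b)$ both differ from $\ell(v)$. So, once $\ell(v)$ is fixed, only $m-1$ possible parent labels can occur, and that is exactly the number of residues modulo $m-1$.

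Concretely, for labels $a,x\in\{0,\dots,m-1\}$ I define
\[
g(a,x) \;:=\; \begin{cases} x & \text{if } x<a,\\ x-1 & \text{if } x>a,\\ 0 & \text{if } x=a.\end{cases}
\]
For each fixed $a$, the map $x\mapsto g(a,x)$ is a bijection from $\{0,\dots,m-1\}\setminus\{a\}$ onto $\{0,\dots,m-2\}$. The value assigned when $x=a$ is irrelevant, and it is included only so that $g$ is defined everywhere. I then set
\[
\psi(F,\ell) \;:=\; \Big(\sum_{u:\text{ non-root node of }F} g\big(\ell(u),\ell(\parent_F(u))\big)\Big) \bmod (m-1),
\]
which takes at most $m-1$ values.

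To check that $\psi$ is a proper colouring, consider an edge of $\mathcal{G}_{V,m}$ joining $(F,\ell)$ and $(F',\ell)$, given by a parent slide at $v$ from $b$ to $c$. As in \zcref{lem:basecolor}, only the term indexed by $v$ changes, and it changes from $g(\ell(v),\ell(b))$ to $g(\ell(v),\ell(c))$. By the three-label property, $\ell(b)$ and $\ell(c)$ are distinct and both differ from $\ell(v)$. Injectivity of $g(\ell(v),\cdot)$ on labels other than $\ell(v)$ then gives two distinct values in $\{0,\dots,m-2\}$, and these are distinct modulo $m-1$. Hence $\psi(F,\ell)\neq\psi(F',\ell)$.

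The only point needing care is this last step, namely that the two $g$-values cannot coincide modulo $m-1$. This holds because both values already lie in the range $\{0,\dots,m-2\}$ and are distinct integers there. The case $m=2$ needs a separate remark: the colouring uses a single colour, and this is valid because no three pairwise distinct labels exist, so $\mathcal{G}_{V,2}$ has no edges.
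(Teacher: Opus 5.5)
Your proposal is correct and is essentially the paper's proof: your $g(a,\cdot)$ is the paper's bijection $f_a$ (your value at $x=a$ never occurs, since $\ell$ is a proper labelling of $F$), the potential is the same sum modulo $m-1$, and you likewise treat $m=2$ separately. The key step is also the same, namely that the three-label property lets injectivity of $g(\ell(v),\cdot)$ force the single changed term to move to a different value in $\{0,\dots,m-2\}$.
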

\begin{proof}
If $m=2$, then no parent slide can occur (by the three-label property), implying 
$\mathcal{G}_{V,m}$ has no edges and is thus 1-colourable. Now assume that $m\geq 3$. 
For each fixed value $x \in \{0,\dots,m-1\}$, define a bijection $f_x$ from
$\{0,\dots,m-1\}\setminus\{x\}$ to $\{0,\dots,m-2\}$ by
\[
f_x(y) := 
\begin{cases} 
y & \text{if }y < x\\ 
y-1 & \text{if }y > x.
\end{cases}
\]
($f_x$ shifts every label above $x$ down by $1$.) For each vertex $(F,\ell)$ of $\mathcal{G}_{V,m}$, define
\[
\phi((F,\ell)) := \Big( \sum_{u:\text{ non-root node of }F} f_{\ell(u)}\big(\ell(\parent_F(u))\big) \Big)\bmod{(m-1)}.
\]
Consider an edge of $\mathcal{G}_{V,m}$ joining two rooted $m$-labelled trees $(F,\ell)$ and $(F',\ell)$, given by a parent slide at $v$ from $b$ to $c$. 
Then $\ell(v)$ does not change (only
the parent of $v$ changes), so the same bijection $f_{\ell(v)}$ is used before and
after; since $\ell(b) \ne \ell(c)$ and both differ from~$\ell(v)$ (by the three-label property), and
$f_{\ell(v)}$ is injective, $f_{\ell(v)}(\ell(b)) \ne f_{\ell(v)}(\ell(c))$. Thus the only
changed term in the sum changes to a different value in $\{0,\dots,m-2\}$. This term changes by a non-zero integer of absolute value at most $m-2$, so the sum changes by a non-zero amount modulo $m-1$. Thus $\phi((F,\ell)) \neq \phi((F',\ell))$, and $\phi$ is a $(m-1)$-colouring of $\mathcal{G}_{V,m}$.
\end{proof}

\zcref{lem:basepalette} will provide the base case in our recursive colouring of $\mathcal{G}_{V,m}$, which trades one use of a large alphabet for two uses of a much smaller alphabet, at the cost of a single extra bit.

\label{def:rofm}
For $m \ge 1$, let
\[
r_m := \min\big\{ r\in\mathbb Z : r\ge 2\text{ and }\tbinom{r}{2} \ge m \big\} 
= \left\lceil \tfrac12 (1+\sqrt{1+8m}) \right\rceil.
\]
Assign to each $x\in\{ 0,\dots,m-1\}$ a distinct $2$-element subset 
$S_x = \{s^0_x, s^1_x\} \subseteq  \{0,\dots,r_m-1\}$ such that $s^0_x < s^1_x$.

Fix a vertex $(F,\ell)$ of $\mathcal{G}_{V,m}$. The following notation suppresses~$\ell$, although it depends on~$\ell$ as well as on~$F$. Define a function
$\ell'_F\colon V \to \{0,\dots,r_m-1\}$ by
\[
\ell'_F(u) := 
\begin{cases}
s^0_{\ell(u)} & \text{if $u$ is the root of $F$},\\
\min\big(S_{\ell(u)} \setminus S_{\ell(\parent_F(u))}\big) & 
\text{if $u$ is a non-root node of $F$.}
\end{cases}
\]
Since the assignment $x\mapsto S_x$ is injective and $\ell(u) \neq \ell( \parent_F(u) )$, in this definition, $S_{\ell(u)} \ne S_{\ell(\parent_F(u))}$, so $S_{\ell(u)}\setminus S_{\ell(\parent_F(u))} \neq\emptyset$. Also define $\mathdefn{\varepsilon_F(u)} \in \{0,1\}$ by
$\ell'_F(u) = s^{\varepsilon_F(u)}_{\ell(u)}$; that is,  $\varepsilon_F(u)$ records which of the two elements of $S_{\ell(u)}$ was chosen as the new label of  $u$.

\begin{lemma}\label{lem:detector}
$\ell'_F$ is an $r_m$-labelling of the tree $F$, implying   $(F,\ell'_F)$ is a vertex of
$\mathcal{G}_{V,r_m}$. Moreover, if $(F,\ell)$ and $(F',\ell)$ are adjacent in $\mathcal{G}_{V,m}$ given
by a parent slide at $v$ in $F$, then 
$\ell'_F(u)=\ell'_{F'}(u)$ for all $u \neq v$, and if $\ell'_F(v)\neq \ell'_{F'}(v)$ then $\varepsilon_F(v) \ne \varepsilon_{F'}(v)$.
\end{lemma}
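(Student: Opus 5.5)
The plan is to check the three assertions one at a time, directly from the definitions of $\ell'_F$ and $\varepsilon_F$. No earlier lemma is needed beyond the three-label property and the injectivity of $x\mapsto S_x$.

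\emph{Step 1: $\ell'_F$ is a proper labelling.} Take any non-root node $u$ with parent $p=\parent_F(u)$. By definition, $\ell'_F(u)\notin S_{\ell(p)}$. In every case, whether $p$ is the root or not, $\ell'_F(p)$ is an element of $S_{\ell(p)}$. Hence $\ell'_F(u)\ne\ell'_F(p)$. The values lie in $\{0,\dots,r_m-1\}$, so $(F,\ell'_F)$ is a vertex of $\mathcal{G}_{V,r_m}$. This is the key observation: each new label is chosen inside the node's own pair $S_{\ell(u)}$ but outside its parent's pair.

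\emph{Step 2: only $v$ can change.} Let $F'$ be the parent slide at $v$ from $b$ to $c$. Then $F$ and $F'$ have the same root, and $\parent_{F'}(u)=\parent_F(u)$ for every $u\ne v$. The labelling $\ell$ is unchanged as well. The value $\ell'_F(u)$ depends only on $\ell(u)$, on whether $u$ is the root, and on $\ell(\parent_F(u))$. So $\ell'_F(u)=\ell'_{F'}(u)$ for all $u\ne v$. Note that the labels of the children of $v$ do not enter the formula at all, which is why the change stays local.

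\emph{Step 3: a change at $v$ flips $\varepsilon$.} Both $\ell'_F(v)$ and $\ell'_{F'}(v)$ lie in $S_{\ell(v)}=\{s^0_{\ell(v)},s^1_{\ell(v)}\}$. Suppose they differ. Then one equals $s^0_{\ell(v)}$ and the other equals $s^1_{\ell(v)}$, so $\varepsilon_F(v)\ne\varepsilon_{F'}(v)$. This follows directly from the definition of $\varepsilon$, because $s^0_{\ell(v)}<s^1_{\ell(v)}$ are distinct. I do not expect any step to be a real obstacle. The only point needing care is Step 1 when the parent is the root, and there $\ell'_F(p)=s^0_{\ell(p)}\in S_{\ell(p)}$ still holds.
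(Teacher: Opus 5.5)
Your proposal is correct and follows essentially the same route as the paper: in Step 1 you show $\ell'_F(u)\notin S_{\ell(\parent_F(u))}\ni\ell'_F(\parent_F(u))$; in Step 2 you note that the formula for $\ell'_F(u)$ depends only on $\ell(u)$ and $\ell(\parent_F(u))$, and only $v$'s parent changes; in Step 3 you observe that two distinct elements of the two-element set $S_{\ell(v)}$ must have different $\varepsilon$-indices. One small remark: the three-label property is never actually used in your argument; what matters is that $\ell$ is a proper labelling together with the injectivity of $x\mapsto S_x$.
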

\begin{proof}
Let $u$ be a non-root node of~$F$ with parent $w=\parent_F(u)$. By definition, $\ell'_F(u) \in
S_{\ell(u)}\setminus S_{\ell(w)}$, so in particular $\ell'_F(u) \notin S_{\ell(w)}$. On the
other hand $\ell'_F(w) \in S_{\ell(w)}$ always, whether $w$ is a root (where
$\ell'_F(w)=s^0_{\ell(w)} \in S_{\ell(w)}$ by definition) or not (where $\ell'_F(w) \in
S_{\ell(w)}\setminus(\cdots) \subseteq S_{\ell(w)}$). Thus $\ell'_F(u) \ne \ell'_F(w)$. 
Hence $\ell'_F$ is an $r_m$-labelling of $F$.

The formula for $\ell'_F(u)$ depends only on $\ell(u)$ (which never changes, since the $m$-labelling~$\ell$ is the same at both ends of an edge of $\mathcal{G}_{V,m}$) and on $\ell(\parent_F(u))$. A
parent slide changes $\parent_F(u)$ only for $u=v$; for every other node $u$,
$\ell(\parent_F(u))=\ell(\parent_{F'}(u))$ trivially since $\parent_F(u)=\parent_{F'}(u)$, so
$\ell'_F(u)=\ell'_{F'}(u)$.

Both possible values of $\ell'_F(v)$ (before and after the slide) lie in the fixed two-element set~$S_{\ell(v)}$ (since the slide does not change $\ell(v)$). If $\ell'_F(v) \ne \ell'_{F'}(v)$, then they are the two different elements of the same $2$-element set, so by definition of $\varepsilon$, exactly one of $\varepsilon_F(v),\varepsilon_{F'}(v)$ is $0$ and the other is $1$.
\end{proof}

\begin{lemma}
\label{lem:NewLemma}
For any finite set $V$ and integer $m\ge1$, 
$\chi(\mathcal{G}_{V,m}) \leq 2 \chi( \mathcal{G}_{V,r_m})$.
\end{lemma}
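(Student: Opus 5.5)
The plan is to build a colouring of $\mathcal{G}_{V,m}$ from a proper colouring of $\mathcal{G}_{V,r_m}$ plus one parity bit, using \zcref{lem:detector}. Let $k=\chi(\mathcal{G}_{V,r_m})$ and fix a proper $k$-colouring $\psi$ of $\mathcal{G}_{V,r_m}$. For each vertex $(F,\ell)$ of $\mathcal{G}_{V,m}$ I would define
\[
\Phi(F,\ell) := \Big( \psi\big((F,\ell'_F)\big),\ \Big(\sum_{u\in V} \varepsilon_F(u)\Big) \bmod 2 \Big).
\]
This uses at most $2k$ colours. By the first part of \zcref{lem:detector}, $(F,\ell'_F)$ is a vertex of $\mathcal{G}_{V,r_m}$, so $\Phi$ is well defined.

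To show $\Phi$ is proper, I would take an edge of $\mathcal{G}_{V,m}$ joining $(F,\ell)$ and $(F',\ell)$. Up to swapping the endpoints, it is given by a parent slide at $v$ from $b$ to $c$ in $F$. Then $F$ and $F'$ have the same root and every node other than $v$ keeps its parent. By \zcref{lem:detector}, $\ell'_F(u)=\ell'_{F'}(u)$ for all $u\neq v$. I would then split into two cases according to whether $\ell'_F(v)=\ell'_{F'}(v)$.

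\emph{Case 1: $\ell'_F(v)=\ell'_{F'}(v)$.} Then $\ell'_F=\ell'_{F'}$ as functions on $V$. The pair $(F,\ell'_F)$ and $(F',\ell'_F)$ is two rooted $r_m$-labelled trees with the same labelling, and $F'$ is a parent slide of $F$. So they are adjacent in $\mathcal{G}_{V,r_m}$, and $\psi$ gives them different colours. Hence the first coordinates of $\Phi$ differ.

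\emph{Case 2: $\ell'_F(v)\neq\ell'_{F'}(v)$.} By \zcref{lem:detector}, $\varepsilon_F(v)\neq\varepsilon_{F'}(v)$. For $u\neq v$, the value $\varepsilon_F(u)$ is determined by $\ell'_F(u)$ together with $S_{\ell(u)}$. Both of these agree for $F$ and $F'$, since $\ell$ is unchanged and $\ell'_F(u)=\ell'_{F'}(u)$. So $\varepsilon_F(u)=\varepsilon_{F'}(u)$ for all $u\neq v$. The parity sums therefore differ by exactly one, and the second coordinates of $\Phi$ differ.

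I expect no real obstacle here; \zcref{lem:detector} was set up for exactly this purpose. The only point needing care is in Case 2: one must observe that $\varepsilon$ at nodes other than $v$ is unchanged, which is immediate because $\varepsilon_F(u)$ is a function of $\ell(u)$ and $\ell'_F(u)$. It is also worth noting that the root does not change under a parent slide, so the root case in the definition of $\ell'_F$ causes no trouble.
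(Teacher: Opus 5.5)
Your proof is correct and matches the paper's: the same colouring (the colour of $(F,\ell'_F)$ in $\mathcal{G}_{V,r_m}$ paired with the parity of $\sum_u \varepsilon_F(u)$), justified via \zcref{lem:detector}. The only difference is cosmetic. You split on whether $\ell'_F(v)=\ell'_{F'}(v)$, whereas the paper splits on whether the parities agree and argues the contrapositive.
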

\begin{proof}
Let $\phi$ be a $k$-colouring of $\mathcal{G}_{V,r_m}$. 
For each vertex $(F,\ell)$ of $\mathcal{G}_{V,m}$, let
\[
\phi'((F,\ell)) \;:=\; \Big(\; \phi((F, \ell'_F))\;,\;\; \big( \sum_{u \in V} \varepsilon_F(u) \big) \bmod 2 \;\Big).
\]
This is a legitimate use of $\phi$, since $(F,\ell'_F)$ is a vertex of
$\mathcal{G}_{V,r_m}$ by \zcref{lem:detector}. We now show that $\phi'$ is a colouring. 
Let $(F,\ell)$ and $(F',\ell)$ be adjacent in $\mathcal{G}_{V,m}$, given by a slide in $F$ at $v$ from~$b$ to~$c$. 
If $\sum_u \varepsilon_F(u) \not\equiv \sum_u \varepsilon_{F'}(u) \pmod 2$, then the two colours differ in their second coordinate. Otherwise, the two sums agree. For every $u \neq v$, \zcref{lem:detector} gives
$\ell'_F(u) = \ell'_{F'}(u)$, and hence $\varepsilon_F(u) = \varepsilon_{F'}(u)$,
since $\varepsilon_F(u)$ is determined by $\ell(u)$ and $\ell'_F(u)$.
Since the two sums agree, it follows that $\varepsilon_F(v) = \varepsilon_{F'}(v)$.
By the last part of \zcref{lem:detector}, this implies $\ell'_F(v) = \ell'_{F'}(v)$.
Thus $\ell'_F = \ell'_{F'}$, so $(F,\ell'_F)$ and $(F',\ell'_{F'})$ are two
vertices of~$\mathcal{G}_{V,r_m}$ with the same labelling that are related by a parent
slide at $v$. Hence they are adjacent in $\mathcal{G}_{V,r_m}$, and
$\phi((F,\ell'_F)) \neq \phi((F',\ell'_{F'}))$. In both cases, 
$(F,\ell)$ and $(F',\ell)$ are assigned distinct colours, 
so $\phi'$ is a colouring of $\mathcal{G}_{V,m}$. Thus $\chi(\mathcal{G}_{V,m}) \leq 2k$.
\end{proof}
\section{The Tournament Tree}\label{sec:tournament}

We now specialise the machinery in \zcref{sec:forests} to associahedra, by building, from any full binary tree $T$, a specific rooted tree on the leaf set of~$T$, together with one extra bit of information, such that a rotation of $T$ changes this data in a way the machinery of  \zcref{sec:forests} can detect. Throughout this section, we fix a positive integer $L$. An \defn{interval} is a nonempty set of consecutive integers in $\{1,\dots,L\}$. Two disjoint intervals $I$ and $J$ are \defn{adjacent} if $I\cup J$ is an interval. For a positive integer $m$, an \defn{$m$-ranking} of $\{1,2,\ldots,L\}$ is a function $\rho\colon\{1,\dots,L\}\to\{0,\dots,m-1\}$ such that every interval $I$ has a unique element of maximum value under~$\rho$; we call this element the \defn{king} of $I$ and denote it by \mathdefn{$\king(I)$}. 

\begin{lemma}\label{lem:nu-king}
  There exists a $\lceil\log_2(L+1)\rceil$-ranking of $\{1,2,\ldots,L\}$.
\end{lemma}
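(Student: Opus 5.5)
We will use the ruler function. For each $i\in\{1,\dots,L\}$, define $\rho(i)$ to be the $2$-adic valuation of $i$, that is, the largest integer $k\ge 0$ such that $2^k$ divides $i$. Since $1\le i\le L$, we have $2^{\rho(i)}\le L$, so $\rho(i)\le\lfloor\log_2 L\rfloor$. The values of $\rho$ therefore lie in $\{0,\dots,\lfloor\log_2 L\rfloor\}$. This set has $\lfloor\log_2 L\rfloor+1=\lceil\log_2(L+1)\rceil$ elements, by the standard identity for positive integers $L$ (the number of bits in the binary expansion of $L$). Hence $\rho$ takes values in $\{0,\dots,m-1\}$ with $m=\lceil\log_2(L+1)\rceil$.

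It remains to show that every interval $I=\{a,\dots,b\}$ has a unique element of maximum $\rho$-value. Let $k$ be the maximum of $\rho$ on $I$, and suppose for contradiction that two elements $x<y$ of $I$ both satisfy $\rho(x)=\rho(y)=k$. Then $x=2^k x'$ and $y=2^k y'$ with $x'$ and $y'$ both odd and $x'<y'$. Hence $x'+1$ is even and $x'<x'+1<y'$. The integer $z=2^k(x'+1)$ then satisfies $x<z<y$, so $z\in I$ because $I$ is an interval. But $z$ is divisible by $2^{k+1}$, so $\rho(z)\ge k+1$, contradicting the maximality of $k$. Thus the maximiser is unique, and $\rho$ is an $m$-ranking.

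Neither step is a real obstacle. The only points needing care are the counting identity $\lfloor\log_2 L\rfloor+1=\lceil\log_2(L+1)\rceil$ and checking that $z$ lies strictly between $x$ and $y$.

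An equivalent recursive construction is also available: give the middle element $\lceil (L+1)/2\rceil$ the label $m-1$ and recurse on the two sides, each of length at most $\lfloor L/2\rfloor$. We use the explicit valuation construction because it is cleaner.
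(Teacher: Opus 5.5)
Your proof is correct and follows the paper's approach: the same $2$-adic valuation labelling, the same bound $2^{\rho(i)}\le L$, and a contradiction obtained from an element of $I$ lying between two maximisers with strictly larger valuation. The only difference is minor: your choice $z=2^k(x'+1)$ has valuation at least $k+1$ outright. The paper instead uses the midpoint $(x+y)/2$, which may have valuation exactly $k$, so it must first choose the pair of maximisers with minimum gap.
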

\begin{proof}
  For a positive integer $i$, 
  let $\rho(i)$ be the maximum integer such that $2^{\rho(i)}$ divides $i$ (so $i$ equals $2^{\rho(i)}$ times an odd number).
  For each $i\in\{1,\dots,L\}$, we have 
$2^{\rho(i)}\le i\le L$, and therefore $\rho(i)<\log_2(L+1)$.

We claim that every interval $I$ has a unique element of maximum $\rho$-value. 
Suppose that distinct integers in $I$ both attain the maximum $\rho$-value $e$ of the numbers in $I$. Choose such integers $i,j$ such that $i<j$ and $j-i$ is minimum. 
So $i=2^ea$ and $j=2^eb$ for some odd integers $a<b$. So $a+b$ is even, and $\frac{i+j}{2}=2^{e}(\frac{a+b}{2})$ is an integer in $I$ whose $\rho$-value is at least~$e$.
If it exceeds $e$, that contradicts maximality and if it equals $e$, it contradicts the minimum choice of $j-i$, which contradicts the choice of  $i$ and $j$. 
\end{proof}
\begin{lemma}\label{lem:ranking}
Let $\rho$ be an $m$-ranking. For all adjacent intervals $I$ and $J$, 
\[
\king(I\cup J)\in\{\king(I),\king(J)\}\quad\text{and}\quad\rho(\king(I))\neq\rho(\king(J)).
\]
\end{lemma}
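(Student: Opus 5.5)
The plan is a direct argument from the definition of an $m$-ranking, using only that every interval has a unique element of maximum $\rho$-value. Write $k=\king(I\cup J)$. Since $I\cup J$ is an interval (as $I$ and $J$ are adjacent), $k$ is well defined and $k\in I$ or $k\in J$; by symmetry assume $k\in I$.

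For the first claim, I will show $k=\king(I)$. Because $I\subseteq I\cup J$, every element of $I$ has $\rho$-value at most $\rho(k)$. Since $k\in I$, the maximum of $\rho$ over $I$ equals $\rho(k)$, and $k$ attains it. Uniqueness of the maximiser in $I$ then gives $\king(I)=k$. The case $k\in J$ is identical with the roles of $I$ and $J$ swapped. This yields $\king(I\cup J)\in\{\king(I),\king(J)\}$.

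For the second claim, suppose for contradiction that $\rho(\king(I))=\rho(\king(J))=:e$. The maximum of $\rho$ over $I\cup J$ is the larger of the maxima over $I$ and over $J$, which is $e$. Both $\king(I)\in I$ and $\king(J)\in J$ attain $e$, and they are distinct because $I$ and $J$ are disjoint. So the interval $I\cup J$ would have two elements of maximum $\rho$-value, contradicting that $\rho$ is an $m$-ranking.

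I expect no real obstacle here. The only points needing care are using adjacency to know that $I\cup J$ is an interval, so the ranking property applies to it, and using disjointness to know the two kings are distinct.
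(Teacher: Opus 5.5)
Your proof is correct and follows essentially the same route as the paper: the maximum of $\rho$ on $I\cup J$ lies in $I$ or in $J$, so uniqueness of the maximiser forces $\king(I\cup J)$ to be $\king(I)$ or $\king(J)$. The second claim is likewise derived by contradiction with uniqueness of the king of $I\cup J$. Your version is slightly more explicit than the paper's one-line argument for the first claim, and you correctly use disjointness to conclude that the two kings are distinct.
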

\begin{proof}
The maximum value of $\rho$ on $I\cup J$ is attained at $\king(I)$ or at $\king(J)$, so $\king(I\cup J)\in\{\king(I),\king(J)\}$. If $\rho(\king(I))=\rho(\king(J))$, then the two distinct elements $\king(I)$ and $\king(J)$ both attain the maximum value of $\rho$ on $I\cup J$, contradicting the uniqueness of $\king(I\cup J)$.
\end{proof}

\begin{figure}[t]
\centering
\begin{tikzpicture}[baseline=0pt,scale=0.85,
  inode/.style={draw,circle,inner sep=0pt,minimum size=5mm,font=\small},
  lnode/.style={draw,rectangle,inner sep=0pt,minimum size=4.5mm,font=\small}]
  \coordinate (e) at (0,0);      \coordinate (a) at (-2.4,-1.4);
  \coordinate (b) at (1.8,-1.4); \coordinate (c) at (0.6,-2.6);
  \coordinate (d) at (3.0,-2.6);
  \coordinate (l1) at (-3.0,-3.8); \coordinate (l2) at (-1.8,-3.8);
  \coordinate (l3) at (0.0,-3.8);  \coordinate (l4) at (1.2,-3.8);
  \coordinate (l5) at (2.4,-3.8);  \coordinate (l6) at (3.6,-3.8);
  \draw[hl,line width=6pt,NavyBlue] (l4) -- (c) -- (b) -- (e);
  \draw[hl,line width=6pt,BrickRed] (l2) -- (a);
  \draw[hl,line width=6pt,OliveGreen] (l6) -- (d);
  \draw (e) -- (a) -- (l1) (a) -- (l2) (e) -- (b) -- (c) -- (l3) (c) -- (l4) (b) -- (d) -- (l5) (d) -- (l6);
  \node[inode,fill=white] at (e) {$4$};
  \node[inode,fill=white] at (a) {$2$};
  \node[inode,fill=white] at (b) {$4$};
  \node[inode,fill=white] at (c) {$4$};
  \node[inode,fill=white] at (d) {$6$};
  \foreach \i/\r in {1/0,2/1,3/0,4/2,5/0,6/1} {
    \node[lnode,fill=white] at (l\i) {$\i$};
    \node[font=\scriptsize,text=gray] at ($(l\i)+(0,-0.55)$) {$\r$};
  }
\end{tikzpicture}
\hspace{0.1\textwidth}
\begin{tikzpicture}[baseline=0pt,scale=0.85,
  lnode/.style={draw,rectangle,inner sep=0pt,minimum size=4.5mm,font=\small}]
  \coordinate (h4) at (0,0);
  \coordinate (h2) at (-1.4,-1.4); \coordinate (h3) at (0,-2.8); \coordinate (h6) at (1.4,-1.4);
  \coordinate (h1) at (-1.4,-2.8); \coordinate (h5) at (1.4,-2.8);
  \draw (h4) -- (h2) -- (h1) (h4) -- (h3) (h4) -- (h6) -- (h5);
  \foreach \i/\r/\col in {4/2/NavyBlue,2/1/BrickRed,6/1/OliveGreen,3/0/black,1/0/black,5/0/black} {
    \node[lnode,fill=white,draw=\col,text=\col] at (h\i) {$\i$};
    \node[font=\scriptsize,text=gray] at ($(h\i)+(0.45,0.3)$) {$\r$};
  }
\end{tikzpicture}
\caption{
Left: A full binary tree $T$. Each non-leaf node is labelled by its king, and the paths $X_2$ (red), $X_4$ (blue), and $X_6$ (green) are highlighted; every other $X_v$ is the single leaf~$v$. The ranking values $(\rho(1),\dots,\rho(6))=(0,1,0,2,0,1)$ are shown in grey. 
Right: The tournament tree $\widehat{T}$ obtained from~$T$ by contracting each $X_v$ to the node $v$.}
\label{fig:tournament}
\end{figure}

In the remainder of this section, we fix a positive integer $m$ and an $m$-ranking $\rho$ of $\{1,\dots,L\}$. Let $T$ be a full binary tree with $L$ leaves. 
Recall that the leaves of $T$ are numbered $1,\dots,L$. For each node $w$ of $T$, the leaves in the subtree of $T$ rooted at $w$ form an interval, and we let \mathdefn{$\king_T(w)$} be the king of this interval. So $\king(w)$ is a leaf in the subtree rooted at $w$. If $w$ is not a leaf, then the sets of leaves in the subtrees rooted at the two children of~$w$ are adjacent intervals, and therefore $\king(w)$ is the king of one of the two children of~$w$ by \zcref{lem:ranking}. For a leaf $v$ of $T$, let 
\[\mathdefn{X^T_v} := \{\, w \in V(T) : \king_T(w) = v \,\}.\] 
When $T$ is clear from the context, we write $\king(w)$ and $X_v$ for $\king_T(w)$ and $X^T_v$, and similarly for $a_v$ below. 
By definition, $v\in X_v$ and every other node in $X_v$ is an ancestor of~$v$. Moreover, if $w\in X_v\setminus\{v\}$, then the child of $w$ whose subtree contains~$v$ is also in $X_v$, since $\king(w)=v$ is the king of a child of $w$ whose subtree contains $v$. So
$X_v$ induces a path from~$v$ to some ancestor~$a_v=a^T_v$ of $v$ in~$T$. 
By construction, 
$\{X_v: v\in\{1,2,\ldots,L\}\}$ is a partition of $V(T)$. 
Define the \defn{tournament tree}~$\widehat{T}$ to be the rooted tree on the node set $\{1,\dots,L\}$ obtained from~$T$ by contracting each $X_v$ to a single node labelled $v$; see \zcref{fig:tournament}. 
We call $a_v$ the \defn{top} of $X_v$. Equivalently, the root of $\widehat{T}$ is $\king(\ROOT(T))$, and for each leaf $u$ with $a_u\neq\ROOT(T)$, 
\[\parent_{\widehat{T}}(u)=\king(\parent_T(a_u)).\]

\begin{lemma}\label{lem:PT-proper}
The ranking $\rho$ is an $m$-labelling of the rooted tree $\widehat{T}$.
\end{lemma}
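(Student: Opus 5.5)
We need to show that for every non-root node $u$ of $\widehat{T}$, $\rho(u)\neq\rho(\parent_{\widehat{T}}(u))$. The plan is to use the description $\parent_{\widehat{T}}(u)=\king(\parent_T(a_u))$ for each leaf $u$ with $a_u\neq\ROOT(T)$, together with \zcref{lem:ranking}.

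Fix such a $u$, and let $w:=\parent_T(a_u)$. Since $w$ is a non-leaf node of $T$, its two children are $a_u$ and a sibling $s$, and the leaf sets of their subtrees are adjacent intervals $I$ (for $a_u$) and $J$ (for $s$). By definition of $X_u$, we have $\king(I)=\king_T(a_u)=u$. Because $a_u$ is the top of the path $X_u$, its parent $w$ is not in $X_u$, so $\king_T(w)\neq u$. By \zcref{lem:ranking}, $\king_T(w)=\king(I\cup J)\in\{\king(I),\king(J)\}=\{u,\king(J)\}$, and therefore $\king_T(w)=\king(J)$. The second part of \zcref{lem:ranking} gives $\rho(\king(I))\neq\rho(\king(J))$, that is, $\rho(u)\neq\rho(\parent_{\widehat{T}}(u))$.

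This covers every non-root node of $\widehat{T}$: the root of $\widehat{T}$ is $\king(\ROOT(T))$, whose class contains $\ROOT(T)$, so every other $u$ has $a_u\neq\ROOT(T)$. Hence $\rho$ assigns different values to adjacent nodes of $\widehat{T}$, and it takes values in $\{0,\dots,m-1\}$, so $\rho$ is an $m$-labelling of $\widehat{T}$.

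The only point needing care is the step $\king_T(w)\neq u$, which relies on $X_u$ being exactly the set of nodes with king $u$, with $a_u$ its topmost element. This is immediate from the definitions, so I do not expect a real obstacle.
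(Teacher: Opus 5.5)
Your proposal is correct and follows essentially the same argument as the paper: take the parent $w$ of the top $a_u$ in $T$, use that $a_u$ is the top of $X_u$ to get $\king_T(w)\neq u$, hence $\king_T(w)$ equals the king of the sibling subtree, and conclude with \zcref{lem:ranking}. Your extra remark that every non-root node $u$ of $\widehat{T}$ has $a_u\neq\ROOT(T)$ is a harmless completeness check that the paper leaves implicit.
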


\begin{proof}
Suppose $\parent_{\widehat{T}}(u)=w$. Let $q$ be the parent of $a_u$ in $T$, so $\king(q)=w$, and let $s$ be the other child of $q$. Since $a_u$ is the top of $X_u$, we have $\king(q)\neq u=\king(a_u)$, and therefore $\king(s)=\king(q)=w$. The sets of leaves in the subtrees rooted at $a_u$ and $s$ are adjacent intervals, whose kings are $u$ and $w$. By \zcref{lem:ranking}, $\rho(u)\neq\rho(w)$.
\end{proof}

We now analyse how a rotation in $T$ changes $\widehat{T}$. For a leaf $u$ of $T$, let $(u=u_0,u_1,\dots,u_k=a_u)$ be the path $X_u$. For $1 \le i \le k$, $u_i$ has two children, one of which is $u_{i-1}$; let
$\omega_u(T) \in \{L,R\}^k$ be the string whose $i$-th letter is $R$ if $u_i$'s other child (the one not equal to $u_{i-1}$) is $u_i$'s right child, and $L$ if it is $u_i$'s left
child. Call this the \defn{contraction word} of $u$.

For a string $w \in \{L,R\}^*$, let \defn{$\mathrm{inv}(w)$} be the number of pairs of positions $i<j$ with $w_i=R$ and $w_j = L$ (an $R$ before an $L$ is an
``inversion''). Define 
\[
i(T) = \sum_{u=1}^{L} \mathrm{inv}(\omega_u(T)) \pmod 2.
\]

The following lemma is trivial.
\begin{lemma}\label{lem:adjtrans}
For any strings $X,Y \in \{L,R\}^*$, if  $w = X\,RL\,Y$ and $w' = X\,LR\,Y$, then $\mathrm{inv}(w) =
\mathrm{inv}(w')+1$.
\end{lemma}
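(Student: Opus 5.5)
The plan is to count inversions directly, splitting the pairs of positions according to how they meet the block $RL$ (in $w$) or $LR$ (in $w'$). Write $w = X\,RL\,Y$ and $w' = X\,LR\,Y$, and let $p = |X|+1$, so the two strings differ exactly at positions $p$ and $p+1$. Every pair of positions $i<j$ falls into exactly one of four classes: both positions outside $\{p,p+1\}$; exactly one position in $\{p,p+1\}$; or $\{i,j\}=\{p,p+1\}$.

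I would compare the contributions of each class to $\mathrm{inv}(w)$ and $\mathrm{inv}(w')$ in turn.

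\begin{itemize}
\item \emph{Both positions outside $\{p,p+1\}$.} The letters at those positions are identical in $w$ and $w'$, so these pairs contribute equally to both counts.
\item \emph{One position in $X$, the other in $\{p,p+1\}$.} A letter $x$ of $X$ at position $i$ forms an inversion with a later position only if $x=R$ and the later letter is $L$. The multiset of letters at positions $\{p,p+1\}$ is $\{R,L\}$ in both strings, so $x$ contributes the same number (namely $1$ if $x=R$, else $0$) in both.
\item \emph{One position in $\{p,p+1\}$, the other in $Y$.} Symmetrically, a letter $y=L$ of $Y$ meets exactly one $R$ among positions $p,p+1$ in both strings, and $y=R$ contributes nothing. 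So again the contributions agree.
\item \emph{The pair $(p,p+1)$ itself.} This pair reads $RL$ in $w$, which is an inversion, and $LR$ in $w'$, which is not.
\end{itemize}

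Summing over the four classes gives $\mathrm{inv}(w)=\mathrm{inv}(w')+1$. No step is a real obstacle. The only point needing care is that the counting for pairs with exactly one position in the block depends only on the multiset $\{R,L\}$ occupying positions $p,p+1$, not on their order.
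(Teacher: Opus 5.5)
Your proof is correct, and it is the routine inversion count the paper has in mind when it calls this lemma trivial and omits the proof. You split pairs into those avoiding the block, those with exactly one position in the block (whose contribution depends only on the multiset $\{R,L\}$ there), and the pair $(p,p+1)$ itself. (One wording slip: your opening sentence lists three classes while saying four, though the itemized list makes the intended four clear.)
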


\begin{lemma}
\label{TT}
Let $T_1$, $T_2$ be full binary trees on $L$ leaves indexed by $1,2,\ldots,L$.
If $T_2$ is obtained from $T_1$ by a rotation, then 
either $(\widehat{T}_1,\rho)$ and $(\widehat{T}_2,\rho)$ are adjacent in $\mathcal{G}_{\{1,\dots,L\},m}$, or $\widehat{T}_1=\widehat{T}_2$ and $i(T_1) \ne i(T_2)$.
\end{lemma}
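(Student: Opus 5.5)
The plan is to localise the effect of the rotation and then do a case analysis on which of three kings has the largest rank.

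\emph{Setup.} By symmetry (swap $T_1$ and $T_2$) it suffices to treat a left rotation at a node $v$. Let $x$ be the right child of $v$ in $T_1$, with shape $\big(A,(B,C)\big)$, and let $y$ be the new left child of $v$ in $T_2$, with shape $\big((A,B),C\big)$. Every node other than $x$ and $y$ keeps the same leaf interval below it, so it keeps the same king. In particular $\king(v)$ is unchanged, namely the king $K$ of the union of the three intervals of $A,B,C$. Write $a,b,c$ for the kings of the intervals of $A$, $B$, $C$. By \zcref{lem:ranking}, $K\in\{a,b,c\}$, $\king_{T_1}(x)=\king(B\cup C)$ and $\king_{T_2}(y)=\king(A\cup B)$. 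Outside $\{a,b,c\}$, every set $X_u$, every top $a_u$ and every contraction word is unchanged. The sets $X_a$, $X_b$, $X_c$ are also unchanged except for the nodes $x$ or $y$, and except for whether they continue through $v$. So the tournament trees can differ only in the parents of $a$, $b$, $c$.

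\emph{Case $K=b$.} Then $x$ and $v$ lie in $X_b$ in $T_1$, and $y$ and $v$ lie in $X_b$ in $T_2$. The tops of $X_a$ and $X_c$ are the roots of $A$ and $C$ in both trees, and both have $\widehat{T}$-parent $b$. Hence $\widehat{T}_1=\widehat{T}_2$.

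The contraction words of $a$ and $c$ are unchanged. In $\omega_b$, the two consecutive letters recorded at $x,v$ in $T_1$ are $R$ (other child $C$) followed by $L$ (other child $A$). At $y,v$ in $T_2$ they are $L$ (other child $A$) followed by $R$ (other child $C$). All other letters of $\omega_b$ are the same, since the nodes below are in $B$ and the nodes above $v$ are untouched. By \zcref{lem:adjtrans} the inversion count changes by exactly one, so $i(T_1)\ne i(T_2)$.

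\emph{Case $K=a$.} In $T_2$, both $y$ and $v$ lie in $X_a$, so $b$ (top: the root of $B$) and $c$ (top: the root of $C$) both have parent $a$. In $T_1$, $v\in X_a$ and $x$ is won by whichever of $b,c$ has the larger rank. If $\rho(b)>\rho(c)$, then in $T_1$ we have $\parent(c)=b$ and $\parent(b)=a$, so $\widehat{T}_2$ is the parent slide of $\widehat{T}_1$ at $c$ from $b$ to $a$. If $\rho(c)>\rho(b)$, the slide is at $b$ from $c$ to $a$. Meanwhile the parent of $a$ (determined by the node above $v$) is unchanged.

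\emph{Case $K=c$.} This is the mirror image. In $T_1$, $a$ and $b$ both have parent $c$. In $T_2$, $y$ is won by the higher-ranked of $a,b$, so $\widehat{T}_1$ is a parent slide of $\widehat{T}_2$, at the lower-ranked of $a,b$ from the other one to $c$.

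In the slide cases, \zcref{lem:PT-proper} ensures that $(\widehat{T}_1,\rho)$ and $(\widehat{T}_2,\rho)$ are vertices of $\mathcal{G}_{\{1,\dots,L\},m}$. They carry the same labelling $\rho$, so they are adjacent.

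The main obstacle is the bookkeeping showing that nothing else changes: every other parent in $\widehat{T}$, and in the case $K=b$ every other letter of every contraction word. The plan is to argue this uniformly from the fact that only the kings of $x$ and $y$ can differ, and that left/right sides at all other nodes are preserved.
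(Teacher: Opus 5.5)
Your proposal is correct and takes essentially the same route as the paper's proof. Both reduce to a left rotation at $v$ with shape $\big(A,(B,C)\big)$, localise all changes to the $\widehat{T}$-parents of the kings $a,b,c$, and split on $\king(v)$: the case $\king(v)=b$ leaves $\widehat{T}$ unchanged and turns $RL$ into $LR$ in $\omega_b$, while $\king(v)\in\{a,c\}$ gives a parent slide of the lower-ranked of the other two kings. The bookkeeping you flag is handled in the paper as you intend: for $u\notin\{a,b,c\}$, the top of $X_u$ is neither $v$, nor the rotated node ($x$/$y$), nor a root of $A,B,C$, so its parent in $T$ and that parent's king are the same in both trees.
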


\begin{proof}
By symmetry, 
we may assume that $T_2$ is obtained from $T_1$ by the left rotation at a node $v$ with shape $(A,(B,C))$ in~$T_1$. 
Let $z$ be the right child of $v$ in $T_1$, which becomes the left child of $v$ in $T_2$. 
Let $a$, $b$, and~$c$ be the kings of the roots of $A$, $B$, and $C$, respectively. Since $A,B,C$ have pairwise disjoint sets of leaves, $a,b,c$ are
pairwise distinct leaves. Since these sets of leaves are unchanged by the rotation, $a,b,c$ do
not depend on whether we compute them in $T_1$ or $T_2$. 

We first show that the rotation only affects the parents of $a$, $b$, and $c$ in the tournament tree. For every node $w\neq z$, the subtree rooted at $w$ has the same set of leaves in~$T_1$ and in~$T_2$ (for $w=v$, it consists of the leaves of $A$, $B$, and $C$), so $\king_{T_1}(w)=\king_{T_2}(w)$, and we denote this leaf by $\king(w)$. For $i\in\{1,2\}$, we have $\king_{T_i}(z)\in\{a,b,c\}$, since it is the king of a child of $z$ in~$T_i$. Hence $\king(v)\in\{a,b,c\}$, since $\king(v)$ is the king of a child of $v$ in~$T_1$. Moreover, $\ROOT(A)$ and $\ROOT(C)$ are the only nodes whose parents change, and the children of $z$ are among $\ROOT(A)$, $\ROOT(B)$, $\ROOT(C)$. Now let $u\notin\{a,b,c\}$ be a leaf. Since $\king_{T_1}(z),\king_{T_2}(z)\in\{a,b,c\}$, we have $X^{T_1}_u=X^{T_2}_u$. 
Its top $a^{T_1}_u=a^{T_2}_u$ is not in $\{z,v,\ROOT(A),\ROOT(B),\ROOT(C)\}$, since the kings of these nodes are in $\{a,b,c\}$ in both trees. So this top has the same parent in $T_1$ and in $T_2$, and the king of this parent is the same in both trees. Hence $u$ has the same parent in $\widehat{T}_1$ and in $\widehat{T}_2$. Similarly, $\king(v)$ has the same parent in $\widehat{T}_1$ and in $\widehat{T}_2$ (if any), since the tops of $X^{T_1}_{\king(v)}$ and~$X^{T_2}_{\king(v)}$ are the same node, which is $v$ or an ancestor of $v$, as nothing changes outside the subtree rooted at $v$. Finally, $\widehat{T}_1$ and $\widehat{T}_2$ have the same root $\king(\{1,2,\ldots,L\})$. 

It remains to compare the parents of the two leaves in $\{a,b,c\}\setminus\{\king(v)\}$. We consider three cases, depending on which of $a,b,c$ is $\king(v)$. 

\medskip{\bf\boldmath Case $\king(v)=a$:} Let $y=\king_{T_1}(z)$, which is in $\{b,c\}$, and let $x$ be the other leaf in $\{b,c\}$. Since $\king(v)=a\neq y$, the node $z$ is the top of $X^{T_1}_y$, and the top of~$X^{T_1}_x$ is the root of $B$ or $C$, whose parent in $T_1$ is $z$. This implies that $\parent_{\widehat{T}_1}(y)=\king(v)=a$ and $\parent_{\widehat{T}_1}(x)=\king_{T_1}(z)=y$. 
Observe that $\king_{T_2}(z)=a$, since 
$a\in\{\king_{T_2}(z),
\king(\ROOT(C))\}$ by 
\zcref{lem:ranking}.
So the tops of $X^{T_2}_b$ and $X^{T_2}_c$ are $\ROOT(B)$ and $\ROOT(C)$, whose parents in $T_2$ are $z$ and $v$, respectively. This implies that $\parent_{\widehat{T}_2}(b)=\parent_{\widehat{T}_2}(c)=a$. Hence $\widehat{T}_2$ is obtained from $\widehat{T}_1$ by the parent slide at $x$ from $y$ to $a$; see \zcref{fig:case-abc} for the case that $y=b$. So $(\widehat{T}_1,\rho)$ and $(\widehat{T}_2,\rho)$ are adjacent in $\mathcal{G}_{\{1,\dots,L\},m}$.

\begin{figure}[!ht]
\[
\begin{array}{ccc}
\begin{tikzpicture}[scale=0.8,baseline=(current bounding box.center)]
  \coordinate (v) at (0,0);    \coordinate (z) at (1.2,-1.2);
  \coordinate (A) at (-1.2,-1.2); \coordinate (B) at (0.5,-2.4); \coordinate (C) at (1.9,-2.4);
  \draw[hl,BrickRed] (A) -- (v);
  \draw[hl,NavyBlue] (B) -- (z);
  \draw[dashed] (v) -- (0,0.8);
  \draw (v) -- (A) (v) -- (z) -- (B) (z) -- (C);
  \tsubtree{-1.2}{-1.2}{A}{BrickRed}{a}
  \tsubtree{0.5}{-2.4}{B}{NavyBlue}{b}
  \tsubtree{1.9}{-2.4}{C}{OliveGreen}{c}
  \node[tnode,label=left:$v$] at (v) {};
  \node[tnode,label=right:$z$] at (z) {};
  \node at (-2.5,-3.75) {$T_1$};
\end{tikzpicture}
&
\begin{tikzpicture}[baseline=-1mm]
  \draw[->,thick] (0,0) -- (2.6,0) node[midway,above]{\small left rotation at $v$};
\end{tikzpicture}
&
\begin{tikzpicture}[scale=0.8,baseline=(current bounding box.center)]
  \coordinate (v) at (0,0);    \coordinate (z) at (-1.2,-1.2);
  \coordinate (C) at (1.2,-1.2); \coordinate (A) at (-1.9,-2.4); \coordinate (B) at (-0.5,-2.4);
  \draw[hl,BrickRed] (A) -- (z) -- (v);
  \draw[dashed] (v) -- (0,0.8);
  \draw (v) -- (z) -- (A) (z) -- (B) (v) -- (C);
  \tsubtree{-1.9}{-2.4}{A}{BrickRed}{a}
  \tsubtree{-0.5}{-2.4}{B}{NavyBlue}{b}
  \tsubtree{1.2}{-1.2}{C}{OliveGreen}{c}
  \node[tnode,label=right:$v$] at (v) {};
  \node[tnode,label=left:$z$] at (z) {};
  \node at (-3.5,-3.75) {$T_2$};
\end{tikzpicture}
\\[4mm]
\begin{tikzpicture}[baseline=(current bounding box.center)]
  \node[tnode,draw=BrickRed,fill=BrickRed,label={[text=BrickRed]right:$a$}] (a) at (0,0) {};
  \node[tnode,draw=NavyBlue,fill=NavyBlue,label={[text=NavyBlue]right:$b$}] (b) at (0,-1) {};
  \node[tnode,draw=OliveGreen,fill=OliveGreen,label={[text=OliveGreen]right:$c$}] (c) at (0,-2) {};
  \draw[dashed] (a) -- (0,0.7);
  \draw (a) -- (b) -- (c);
  \node at (-3.3,-1.7) {$\widehat{T}_1$};
\end{tikzpicture}
&
\begin{tikzpicture}[baseline=-1mm]
  \draw[->,thick] (0,0) -- (2.6,0) node[midway,above]{\small parent slide at $c$};
\end{tikzpicture}
&
\begin{tikzpicture}[baseline=(current bounding box.center)]
  \node[tnode,draw=BrickRed,fill=BrickRed,label={[text=BrickRed]left:$a$}] (a) at (0,0) {};
  \node[tnode,draw=NavyBlue,fill=NavyBlue,label={[text=NavyBlue]left:$b$}] (b) at (0,-1) {};
  \node[tnode,draw=OliveGreen,fill=OliveGreen,label={[text=OliveGreen]left:$c$}] (c) at (0,-2) {};
  \draw[dashed] (a) -- (0,0.7);
  \draw (a) -- (b);
  \draw (c) [bend right=40] to (a);
  \node at (-3.3,-1.7) {$\widehat{T}_2$};
\end{tikzpicture}
\end{array}
\]
\caption{The case $\king(v)=a$ in the proof of \zcref{TT}, when $y=b$ and $x=c$. Top: the left rotation at $v$ from $T_1$ to $T_2$, where the highlighted paths show the parts of $X_a$ (red), $X_b$ (blue), and $X_c$ (green). Bottom: the corresponding change from $\widehat{T}_1$ to $\widehat{T}_2$ is the parent slide at~$c$ from~$b$ to~$a$. }
\label{fig:case-abc}
\end{figure}

\medskip{\bf\boldmath Case $\king(v)=b$:}
By \zcref{lem:ranking}, we have $\king_{T_1}(z)=\king_{T_2}(z)=b$. So for $i\in\{1,2\}$, the tops of $X^{T_i}_a$ and $X^{T_i}_c$ are $\ROOT(A)$ and $\ROOT(C)$, whose parents in $T_i$ are in $\{v,z\}$. This implies that $\parent_{\widehat{T}_1}(a)=\parent_{\widehat{T}_2}(a)=b$ and $\parent_{\widehat{T}_1}(c)=\parent_{\widehat{T}_2}(c)=b$. 
Hence $\widehat{T}_1=\widehat{T}_2$. Consider the corresponding contraction words. 
The nodes $v$ and $z$ are the only nodes whose children change, and both are in~$X^{T_1}_b$ and in~$X^{T_2}_b$. So for every leaf $u\neq b$, we have $X^{T_1}_u=X^{T_2}_u$, and the children of its nodes are the same in $T_1$ and $T_2$, and hence $\omega_u(T_1)=\omega_u(T_2)$. Consider the contraction word of $b$, first in $T_1$ and then in $T_2$. 
In~$T_1$, walking up from $b$, the letter at~$z$ records that the other child $\ROOT(C)$ is to the right, and the letter at $v$ records that the other child $\ROOT(A)$ is to the left, so $b$ gains ``$RL$''. In $T_2$, at $z$ the other child $\ROOT(A)$ is to the left, and at $v$ the other child $\ROOT(C)$ is to the right, so $b$ gains ``$LR$''. 
Writing~$X$ for the (identical) portion of $\omega_b$ from inside $B$ and $Y$ for the (identical) portion from above~$v$, this gives
$\omega_b(T_1)=X\,RL\,Y$ and $\omega_b(T_2)=X\,LR\,Y$; see \zcref{fig:case-b}.
By \zcref{lem:adjtrans},
$\mathrm{inv}(\omega_b(T_1)) = \mathrm{inv}(\omega_b(T_2))+1$, while $\mathrm{inv}(\omega_u(T_1))
= \mathrm{inv}(\omega_u(T_2))$ for every other leaf~$u$. Summing over all~$L$ leaves, $i(T_1) \equiv
i(T_2)+1 \pmod 2$.

\begin{figure}[!ht]
\[
\begin{array}{ccc}
\begin{tikzpicture}[scale=0.8,baseline=(current bounding box.center)]
  \coordinate (v) at (0,0);    \coordinate (z) at (1.2,-1.2);
  \coordinate (A) at (-1.2,-1.2); \coordinate (B) at (0.5,-2.4); \coordinate (C) at (1.9,-2.4);
  \draw[hl,NavyBlue] (B) -- (z) -- (v);
  \draw[dashed] (v) -- (0,0.8);
  \draw (v) -- (A) (v) -- (z) -- (B) (z) -- (C);
  \tsubtree{-1.2}{-1.2}{A}{BrickRed}{a}
  \tsubtree{0.5}{-2.4}{B}{NavyBlue}{b}
  \tsubtree{1.9}{-2.4}{C}{OliveGreen}{c}
  \node[tnode,label=left:$v$,label={[text=NavyBlue,font=\small]right:$L$}] at (v) {};
  \node[tnode,label={[text=NavyBlue,font=\small]left:$R$},label=right:$z$] at (z) {};
  \node at (-2.5,-3.75) {$T_1$};
\end{tikzpicture}
&
\begin{tikzpicture}[baseline=-1mm]
  \draw[->,thick] (0,0) -- (2.6,0) node[midway,above]{\small left rotation at $v$};
\end{tikzpicture}
&
\begin{tikzpicture}[scale=0.8,baseline=(current bounding box.center)]
  \coordinate (v) at (0,0);    \coordinate (z) at (-1.2,-1.2);
  \coordinate (C) at (1.2,-1.2); \coordinate (A) at (-1.9,-2.4); \coordinate (B) at (-0.5,-2.4);
  \draw[hl,NavyBlue] (B) -- (z) -- (v);
  \draw[dashed] (v) -- (0,0.8);
  \draw (v) -- (z) -- (A) (z) -- (B) (v) -- (C);
  \tsubtree{-1.9}{-2.4}{A}{BrickRed}{a}
  \tsubtree{-0.5}{-2.4}{B}{NavyBlue}{b}
  \tsubtree{1.2}{-1.2}{C}{OliveGreen}{c}
  \node[tnode,label={[text=NavyBlue,font=\small]left:$R$},label=right:$v$] at (v) {};
  \node[tnode,label=left:$z$,label={[text=NavyBlue,font=\small]right:$L$}] at (z) {};
  \node at (-3.5,-3.75) {$T_2$};
\end{tikzpicture}
\\[4mm]
\begin{tikzpicture}[baseline=(current bounding box.center)]
  \node[tnode,draw=NavyBlue,fill=NavyBlue,label={[text=NavyBlue]right:$b$}] (b) at (0,0) {};
  \node[tnode,draw=BrickRed,fill=BrickRed,label={[text=BrickRed]left:$a$}] (a) at (-0.6,-1) {};
  \node[tnode,draw=OliveGreen,fill=OliveGreen,label={[text=OliveGreen]right:$c$}] (c) at (0.6,-1) {};
  \draw[dashed] (b) -- (0,0.7);
  \draw (a) -- (b) -- (c);
  \node at (-2.5,-0.8) {$\widehat{T}_1$};
\end{tikzpicture}
&
\begin{tikzpicture}[baseline=-1mm]
  \node at (1.3,0) {$=$};
\end{tikzpicture}
&
\begin{tikzpicture}[baseline=(current bounding box.center)]
  \node[tnode,draw=NavyBlue,fill=NavyBlue,label={[text=NavyBlue]right:$b$}] (b) at (0,0) {};
  \node[tnode,draw=BrickRed,fill=BrickRed,label={[text=BrickRed]left:$a$}] (a) at (-0.6,-1) {};
  \node[tnode,draw=OliveGreen,fill=OliveGreen,label={[text=OliveGreen]right:$c$}] (c) at (0.6,-1) {};
  \draw[dashed] (b) -- (0,0.7);
  \draw (a) -- (b) -- (c);
\node at (-3,-0.8) {$\widehat{T}_2$};
\end{tikzpicture}
\end{array}
\]
\caption{The case $\king(v)=b$ in the proof of \zcref{TT}. Top: the left rotation at $v$ from $T_1$ to $T_2$, where the highlighted paths show the parts of $X_a$ (red), $X_b$ (blue), and $X_c$ (green); the blue letters at $z$ and $v$ are the corresponding letters of the contraction word $\omega_b$, which change from $RL$ to $LR$. Bottom: the tournament trees are unchanged, $\widehat{T}_1=\widehat{T}_2$. }
\label{fig:case-b}
\end{figure}

\medskip
{\bf\boldmath Case $\king(v)=c$:} Let $y=\king_{T_2}(z)$, which is in $\{a,b\}$, and let $x$ be the other leaf in $\{a,b\}$.
By \zcref{lem:ranking}, 
we have $\king_{T_1}(z)=c$. So the tops of $X^{T_1}_a$ and $X^{T_1}_b$ are $\ROOT(A)$ and $\ROOT(B)$, whose parents in $T_1$ are $v$ and~$z$, respectively. 
This implies that $\parent_{\widehat{T}_1}(a)=\parent_{\widehat{T}_1}(b)=c$. 
Since $\king(v)=c\neq y$, the node $z$ is the top of $X^{T_2}_y$, and the top of $X^{T_2}_x$ is the root of $A$ or $B$, whose parent in $T_2$ is~$z$. This implies that $\parent_{\widehat{T}_2}(y)=\king(v)=c$ and $\parent_{\widehat{T}_2}(x)=\king_{T_2}(z)=y$. 
Hence $\widehat{T}_1$ is obtained from $\widehat{T}_2$ by the parent slide at $x$ from $y$ to $c$. So $(\widehat{T}_1,\rho)$ and $(\widehat{T}_2,\rho)$ are adjacent in~$\mathcal{G}_{\{1,\dots,L\},m}$.
\end{proof}

\section{Finishing the Proof}\label{sec:mainthm}

The \defn{rotation graph} $\mathcal{R}_L$ 
is a graph on all full binary trees with $L$ leaves
such that two such trees  are adjacent whenever one  is obtained from the other by a  rotation.
By \zcref{sec:bijection}, $\mathcal{A}_n$ is isomorphic to $\mathcal{R}_{n-1}$.

\begin{lemma}\label{thm:main2}
For every integer $L\ge2$, 
$ \chi(\mathcal{R}_L) \leq 2\,\chi( \mathcal{G}_{\{1,\dots,L\},\HHH})$
where $\HHH = \lceil \log_2(L+1) \rceil$.
\end{lemma}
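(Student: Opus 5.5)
Fix the ranking $\rho$ from \zcref{lem:nu-king}; it is an $\HHH$-ranking of $\{1,\dots,L\}$ with $\HHH=\lceil\log_2(L+1)\rceil$. Let $\phi$ be a $k$-colouring of $\mathcal{G}_{\{1,\dots,L\},\HHH}$ with $k=\chi(\mathcal{G}_{\{1,\dots,L\},\HHH})$. For a full binary tree $T$ with $L$ leaves, form the tournament tree $\widehat{T}$. By \zcref{lem:PT-proper}, $\rho$ is an $\HHH$-labelling of $\widehat{T}$. Hence $(\widehat{T},\rho)$ is a vertex of $\mathcal{G}_{\{1,\dots,L\},\HHH}$, and we may define
\[
\psi(T) := \big(\phi((\widehat{T},\rho)),\; i(T)\big),
\]
which takes at most $2k$ values.

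It remains to check that $\psi$ is a proper colouring of $\mathcal{R}_L$. Take adjacent $T_1,T_2$ in $\mathcal{R}_L$; by definition, one is obtained from the other by a rotation. Since \zcref{TT} is stated for $T_2$ obtained from $T_1$, we relabel the pair if necessary so that this holds. \zcref{TT} then gives two cases.

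\begin{itemize}
\item If $(\widehat{T}_1,\rho)$ and $(\widehat{T}_2,\rho)$ are adjacent in $\mathcal{G}_{\{1,\dots,L\},\HHH}$, then $\phi$ assigns them different colours, so the first coordinates of $\psi(T_1)$ and $\psi(T_2)$ differ.
\item Otherwise $\widehat{T}_1=\widehat{T}_2$ and $i(T_1)\neq i(T_2)$, so the second coordinates differ.
\end{itemize}

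In both cases $\psi(T_1)\neq\psi(T_2)$, so $\chi(\mathcal{R}_L)\le 2k$.

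There is no real obstacle here, since all the work was done in \zcref{TT}. The only points to watch are that the ranking must be fixed once, independently of $T$, so that the labelling $\rho$ is identical at both endpoints of an edge, and that $L\ge 2$ guarantees the trees are well defined.
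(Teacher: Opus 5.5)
Your proof is correct and is essentially the paper's argument: fix the ranking $\rho$ from \zcref{lem:nu-king} once, colour $T$ by the pair consisting of $\phi((\widehat{T},\rho))$ and the parity bit $i(T)$, use \zcref{lem:PT-proper} to see that $(\widehat{T},\rho)$ is a vertex of $\mathcal{G}_{\{1,\dots,L\},\HHH}$, and use \zcref{TT} to see that the colouring is proper. The only differences are cosmetic: you list the two coordinates in the opposite order, and you explicitly relabel the pair so that \zcref{TT} applies.
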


\begin{proof}
Let $\phi$ be a $k$-colouring of $\mathcal{G}_{\{1,\dots,L\},\HHH}$. Let $\rho$ be an $\HHH$-ranking of $\{1,\dots,L\}$, which exists by \zcref{lem:nu-king}. 
Let $T$ be a full binary tree with $L$ leaves. 
So $(\widehat{T},\rho)$ is a vertex of~$\mathcal{G}_{\{1,\dots,L\},\HHH}$ by \zcref{lem:PT-proper}. Colour $T$ by the pair $\big(i(T), \phi((\widehat{T},\rho))\big)$. By \zcref{TT}, adjacent trees in~$\mathcal{R}_L$ are assigned distinct colours.  There are $2k$ colours, so $\chi(\mathcal{R}_L) \leq 2k$.
\end{proof}

\begin{lemma}
\label{lem:explicit}
For any finite set $V$ and integer $m\geq 6$, 
\[ \chi(\mathcal{G}_{V,m}) \leq 5\big(\log_2(m-2) - 1\big).\] 
\end{lemma}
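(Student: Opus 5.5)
The plan is strong induction on $m\ge 6$. Write $f(m):=5\big(\log_2(m-2)-1\big)$. The base cases use \zcref{lem:basepalette}, and the inductive step uses \zcref{lem:NewLemma}, which gives $\chi(\mathcal{G}_{V,m})\le 2\chi(\mathcal{G}_{V,r_m})$.

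\emph{Base cases $6\le m\le 13$.} By \zcref{lem:basepalette}, $\chi(\mathcal{G}_{V,m})\le m-1$, so it suffices to check $m-1\le f(m)$ on this range. Equivalently, $m+4\le 5\log_2(m-2)$, i.e.\ $2^{m+4}\le (m-2)^5$.
\begin{itemize}
\item At $m=6$ this is an equality: $2^{10}=4^5$.
\item At $m=13$ it holds since $2^{17}=131072\le 11^5=161051$.
\item For $7\le m\le 12$ the check is a routine numerical verification; the difference $5\log_2(m-2)-(m+4)$ is concave in $m$, so it is nonnegative between the two endpoints.
\end{itemize}
(At $m=14$ the inequality fails, since $13>5\log_2 12-5\approx 12.9$; this is why the base range stops at $13$.)

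\emph{Inductive step $m\ge 14$.} Let $r=r_m$.
\begin{itemize}
\item Since $\binom{5}{2}=10<m$, we have $r\ge 6$.
\item Since $\binom{m}{2}\ge m$ and $m\ge 2$, we have $r\le m$; in fact $r<m$, because $\binom{m-1}{2}\ge m$ for $m\ge 5$.
\end{itemize}
So the induction hypothesis applies to $r$, and \zcref{lem:NewLemma} gives $\chi(\mathcal{G}_{V,m})\le 2f(r)$. It remains to show $2f(r)\le f(m)$, that is,
\[
10\log_2(r-2)-10\le 5\log_2(m-2)-5 \iff (r-2)^2\le 2(m-2).
\]
By minimality of $r$ (using $r-1\ge 2$), we have $\binom{r-1}{2}<m$, so $(r-1)(r-2)\le 2m-2$. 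Then
\[
(r-2)^2=(r-1)(r-2)-(r-2)\le 2m-2-(r-2)=2m-r\le 2m-4,
\]
where the last step uses $r\ge 4$. This completes the induction.

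The only delicate point is choosing the base range. It must be large enough that every $m$ in the inductive step has $r_m\ge 6$, so the hypothesis applies, and small enough that $m-1\le f(m)$ still holds. The range $6\le m\le 13$ satisfies both: $m\ge 14$ forces $r_m\ge 6$, and $m-1\le f(m)$ holds up to $m=13$ but fails at $m=14$, as the check above shows.
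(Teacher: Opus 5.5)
Your proof is correct and follows the paper's argument: base cases from \zcref{lem:basepalette}, and an inductive step via \zcref{lem:NewLemma}, using minimality of $r_m$ to get $(r_m-2)^2\le 2m-r_m\le 2(m-2)$. The only difference is the split point. The paper takes base cases $6\le m\le 10$, since $r_m\ge 6$ already holds for $m\ge 11$, whereas you take $6\le m\le 13$; any split point from $10$ to $13$ works.
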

\begin{proof}
We proceed by induction on $m\geq 6$. If $m\le10$, then $\chi(\mathcal{G}_{V,m})\le m-1\le5\big(\log_2(m-2)-1\big)$ by \zcref{lem:basepalette}. Now assume $m\ge11$. Then $6\le r_m<m$, since $\binom{5}{2}<m\le\binom{m-1}{2}$. By the definition of $r_m$, we have $\binom{r_m-1}2 \le m-1$, implying 
\[
(r_m-2)^2 \,=\, (r_m-1)(r_m-2) - (r_m-2) \,\le\, 2m-2 -(r_m-2) \,\leq\, 2m-4 \,=\, 2(m-2),
\]
where the last inequality holds since $r_m\ge6$.
Thus
\[
\log_2(r_m-2) -1
< \tfrac12\log_2(2(m-2)) -1
= \tfrac12(\log_2(m-2)-1).
\]
By \zcref{lem:NewLemma} and induction hypothesis, 
\[
\chi(\mathcal{G}_{V,m}) 
\le
2\,\chi(\mathcal{G}_{V,r_m}) 
\le 2\cdot5\big(\log_2(r_m-2)-1\big) 
 <
5\big(\log_2(m-2)-1\big).\qedhere
\]
\end{proof}

\begin{proof}[Proof of \zcref{thm:main}] 
Let $\HHH = \lceil\log_2 n\rceil\geq 2$. Since $\mathcal{A}_n \cong \mathcal{R}_{n-1}$, by \zcref{thm:main2} applied with $L=n-1$ (so that $\lceil\log_2(L+1)\rceil=\HHH$), 
\[\chi(\mathcal{A}_n) 
\,=\, \chi(\mathcal{R}_{n-1}) 
\,\le\, 2\,\chi(\mathcal{G}_{\{1,\dots,n-1\},\HHH}) .\] 
If $\HHH\in\{2,\dots,5\}$ then by \zcref{lem:basepalette},  
\[\chi(\mathcal{A}_n) 
\,\le\, 2\,\chi(\mathcal{G}_{\{1,\dots,n-1\},\HHH}) 
\,\le\, 2(\HHH-1) \,\le\, 8 < 10 \log_2\log_2 n,\] 
since $n\ge4$ gives $\log_2\log_2 n\ge1$. 
Now assume that $\HHH\ge 6$. By \zcref{lem:explicit},
\[
\chi(\mathcal{A}_n) \,\le\, 
2\,\chi(\mathcal{G}_{\{1,\dots,n-1\},\HHH}) \,\le\, 10\big(\log_2(\HHH-2)-1\big)  \,<\, 10\log_2\log_2 n,
\]
since $\HHH-2<\log_2 n-1<2\log_2 n$.\qedhere
\end{proof}

\section{Open Problem}\label{sec:open}

The main question left open by \zcref{thm:main} is whether $\chi(\mathcal{A}_n)$ is bounded from above by an absolute constant independent of $n$. Two standard lower bounds do not help here. 
First,~$\mathcal{A}_n$ is triangle-free for $n\ge4$~\cite{FabilaEtAl2009}, so the clique-number lower bound for the chromatic number is at most two. 
Second,~$\mathcal{A}_n$ contains no $K_{2,3}$ subgraph\footnote{If distinct triangulations $T$ and $T'$ have a common neighbour, then they are non-adjacent since $\mathcal{A}_n$ is triangle-free, so $T'=(T\setminus\{d_1,d_2\})\cup\{e_1,e_2\}$ for some diagonals $d_1,d_2,e_1,e_2$, and every common neighbour of $T$ and $T'$ is of the form $(T\setminus\{d_i\})\cup\{e_j\}$. Since each diagonal of a triangulation can be flipped in only one way, each $d_i$ gives at most one common neighbour, so $T$ and $T'$ have at most two common neighbours. Hence $K_{2,3}$ is not a subgraph of $\mathcal{A}_n$.}. By the zig-zag theorem of Simonyi and Tardos~\cite{ST06}, every graph that is topologically $t$-chromatic in their sense contains $K_{\lceil t/2\rceil,\lfloor t/2\rfloor}$ as a subgraph, so $\mathcal{A}_n$ is not topologically $5$-chromatic. In particular, the topological lower bound of Lov\'asz~\cite{Lovasz1978}, which bounds $\chi(G)$ from below in terms of the connectivity of the neighbourhood complex of $G$, is at most $4$ for $\mathcal{A}_n$, since every graph for which this bound is at least $t$ is topologically $t$-chromatic (see~\cite{MZ04,ST06}). Therefore these methods cannot show that $\chi(\mathcal{A}_n)\ge5$. 
According to \cite{AddarioBerryEtAl}, Fabila-Monroy showed that $\chi(\mathcal{A}_{10})\ge 4$. We computationally verified\footnote{
  Verification codes and documents are available 
  in the ancillary files on arXiv.
} that $\chi(\mathcal{A}_{15})\le 4$. 
Since $\mathcal{A}_n$ is isomorphic to an induced subgraph of $\mathcal{A}_{n+1}$, it follows that $\chi(\mathcal{A}_n)=4$ for $n\in\{10,\dots,15\}$. It remains open whether $\chi(\mathcal{A}_n)\ge5$ for some~$n$.

\subsection*{Statement of AI use} GPT-6 Astra and Claude were used in the discovery of this proof, and for the creation of the first draft of the paper. 
The authors have rewritten the paper, declare that they understand the proof, and take responsibility for the entire paper.

\newcommand{\etalchar}[1]{$^{#1}$}

\end{document}